\numberwithin{equation}{section}
\numberwithin{figure}{section}
\theoremstyle{plain}
\newtheorem{thm}{\protect\theoremname}
\theoremstyle{definition}
\newtheorem{defn}[thm]{\protect\definitionname}
\theoremstyle{plain}
\newtheorem{lem}[thm]{\protect\lemmaname}
\providecommand{\definitionname}{Definition}
\providecommand{\lemmaname}{Lemma}
\providecommand{\theoremname}{Theorem}
\begin{document}
\title{{\large{}a combinatorial approach to central set theorem}}
\author{sayan goswami}
\address{{\large{}Department of Mathematics, University of Kalyani, Kalyani-741235,
Nadia, West Bengal, India.}}
\email{{\large{}sayan92m@gmail.com}}
\keywords{{\large{}Central set, $J-$set and $C-$set.}}
\begin{abstract}
{\large{}H. Furstenberg introduced the notion of central set in terms
of topological dynamics and established the central set theorem. The
essence of central set theorem is that it is the simultaneous extension
of van der Waerden's theorem and Hindman's theorem. Later V. Bergelson
and N. Hindman established a connection between central sets and the
algebra of Stone-\v{C}ech compactification and proved that the central
sets are the member of minimal idempotent ultrafilters of discrete
semigroup. In some subsequent papers the central set theorem has been
studied more deeply and some generalizations has been established.
Those works use the techniques of Stone-\v{C}ech compactification
of discrete semigroup. In this work we will prove central set theorem
via combinatorial approach using the combinatorial characterization
of central set established by N. Hindman, A. Malkeki and D. Strauss.
Though we will use the combinatorial approach but the technique of
the proof is similar to the proof of D. De, N. Hindman, D. Strauss.}{\large\par}
\end{abstract}

\maketitle

\section{\textbf{\large{}introduction}}

{\large{}A subset $S$ of $\mathbb{Z}$ is called syndetic if there
exists $r\in\mathbb{N}$ such that $\bigcup_{i=1}^{r}\left(S-i\right)=\mathbb{Z}$
and it is called thick if it contains arbitrary long intervals in
it. Sets which can be expressed as intersection of thick and syndetic
sets are called piecewise syndetic sets.}{\large\par}

{\large{}For a general semigroup $\left(S,\cdot\right)$, a set $A\subseteq S$
is said to be syndetic in $\left(S,\cdot\right)$, if there exists
a finite nonempty set $F\subseteq S$ such that $\bigcup_{t\in F}t^{-1}A=S$
where $t^{-1}A=\left\{ s\in S:t\cdot s\in A\right\} $. A set $A\subseteq S$
is said to be thick if for every finite nonempty set $E\subseteq S$,
there exists an element $x\in S$ such that $E\cdot x\subseteq A$.
A set $A\subseteq S$ is said to be piecewise syndetic set if there
exist a finite nonempty set $F\subseteq S$ such that $\bigcup_{t\in F}t^{-1}A$
is thick in $S$. It can be proved that a piecewise syndetic set is
the intersection of a thick set and a syndetic set.}{\large\par}

{\large{}Central set was defined in terms of topological dynamics
by H. Furstenberg \cite{key-3} and it's equivalent algebraic characterization
was established by V. Bergelson and N. Hindman in \cite{key-8}. H.
Furstenberg first established the central set theorem and his proof
was in terms of topological dynamics. Later this theorem was generalized
and all those proofs uses the techniques of algebraic structure of
the Stone-\v{C}ech compactification of discrete semigroup. One can
see \cite{key-1.1} for the details history of central set theorem.
Recently we came to know from \cite{key-8} that there is no combinatorial
approach to central set theorem is known. So in this work we tried
to give a combinatorial approach to central set theorem. To do this
we will use the combinatorial characterization of central sets. }\\
{\large{}Central sets has an combinatorial characterization which
will be needed for our purpose, stated below.}{\large\par}
\begin{thm}
{\large{}\label{Charac. central set} \cite[Theorem 3.8]{key-5} For
a countable semigroup $\left(S,.\right)$, $A\subseteq S$ is said
to be central iff there is a decreasing sequence $\langle C_{n}\rangle_{n=1}^{\infty}$
of subsets of $A$ such that,}{\large\par}
\begin{enumerate}
\item {\large{}\label{1.1} for each $n\in\mathbb{N}$ and each $x\in C_{n}$,
there exists $m\in\mathbb{N}$ with $C_{m}\subseteq x^{-1}C_{n}$
and}{\large\par}
\item {\large{}\label{1.2} $C_{n}$ is collectionwise piecewise syndetic
$\forall\,n\in\mathbb{N}$.}{\large\par}
\end{enumerate}
\end{thm}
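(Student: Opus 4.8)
The plan is to route the equivalence through the algebraic description of central sets recalled above: $A\subseteq S$ is central exactly when some minimal idempotent $p$ of $\beta S$, i.e.\ an idempotent lying in the smallest two-sided ideal $K(\beta S)$, satisfies $A\in p$. For $B\in p$ I will write $B^{\star}=\{x\in B:x^{-1}B\in p\}$; because $p$ is idempotent one has $B^{\star}\in p$, and for every $x\in B^{\star}$ also $x^{-1}(B^{\star})\in p$. These two closure properties of the $\star$-operation, together with the fact that $S$ is countable, are the engine of the construction in the forward direction.

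For the necessity half I would fix such a $p$ with $A\in p$ and enumerate $S=\{s_{n}:n\in\mathbb{N}\}$. Starting from $C_{1}=A^{\star}\subseteq A$ I build a decreasing chain of members of $p$, at each stage passing to a further $\star$-set and simultaneously intersecting with the finitely many translates $s_{i}^{-1}C_{n}$ ($i\le n$) that the current membership certifies to lie in $p$, so that every pair $(n,x)$ with $x\in C_{n}$ eventually gets some later $C_{m}$ pushed inside $x^{-1}C_{n}$. The decreasing family $\langle C_{n}\rangle$ then satisfies condition \ref{1.1} by construction, and condition \ref{1.2} comes essentially for free: since every $C_{n}$ belongs to the single ultrafilter $p\in K(\beta S)$, the family is collectionwise piecewise syndetic by the standard equivalence between collectionwise piecewise syndeticity of a family and its being contained in one ultrafilter of the smallest ideal.

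For the sufficiency half I would start from the given sequence and form the closed set $E=\bigcap_{n}\overline{C_{n}}\subseteq\beta S$. Condition \ref{1.2} supplies a point of $K(\beta S)$ lying in every $\overline{C_{n}}$, so $E\cap K(\beta S)\neq\emptyset$; condition \ref{1.1} is precisely what makes $E$ a subsemigroup, since for $q,r\in E$ and any $n$ one checks $C_{n}\subseteq\{x:x^{-1}C_{n}\in r\}$ (each $x\in C_{n}$ has some $C_{m}\subseteq x^{-1}C_{n}$ with $C_{m}\in r$), whence $C_{n}\in q\cdot r$ and so $q\cdot r\in E$. Then $E$ is a compact right-topological semigroup meeting $K(\beta S)$, so $K(E)=E\cap K(\beta S)$ and any minimal idempotent $e$ of $E$ is a minimal idempotent of $\beta S$; since $e\in\overline{C_{1}}\subseteq\overline{A}$ we get $A\in e$, and hence $A$ is central.

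I expect the genuine obstacle to sit in the forward direction: keeping every $C_{n}$ inside $p$ while securing condition \ref{1.1} forces a careful diagonal bookkeeping, because the translate $x^{-1}C_{n}$ belongs to $p$ only when $x$ lies in the appropriate $\star$-set, so the enumeration of $S$ and the successive $\star$-operations must be interleaved just so. By contrast the converse is a clean structural argument once the equivalence ``collectionwise piecewise syndetic $\Leftrightarrow$ contained in a single ultrafilter of $K(\beta S)$'' and the identity $K(E)=E\cap K(\beta S)$ are invoked, and I would treat those two facts as the external inputs I am prepared to cite rather than reprove.
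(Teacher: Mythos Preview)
The paper does not supply its own proof of this statement: Theorem~\ref{Charac. central set} is quoted verbatim from Hindman--Maleki--Strauss \cite{key-5} and used thereafter as a black box. So there is no in-paper argument to compare your proposal against.

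That said, your sketch is essentially the original proof from \cite{key-5} (see also \cite[Theorem 14.25]{key-6}). The forward direction---fix a minimal idempotent $p$ with $A\in p$, iterate the $\star$-operation, and diagonalise over an enumeration of $S$ so that every pair $(n,x)$ with $x\in C_{n}$ is eventually served---is correct, and your identification of the bookkeeping as the only real work is accurate; one just has to keep each $C_{n}$ a $\star$-set so that $x\in C_{n}$ genuinely forces $x^{-1}C_{n}\in p$. The backward direction is exactly the standard compactness argument: $E=\bigcap_{n}\overline{C_{n}}$ is closed, condition~\ref{1.1} makes it a subsemigroup, condition~\ref{1.2} together with the equivalence ``collectionwise piecewise syndetic $\Leftrightarrow$ contained in some $p\in K(\beta S)$'' gives $E\cap K(\beta S)\neq\emptyset$, and then $K(E)=E\cap K(\beta S)$ yields a minimal idempotent of $\beta S$ containing $A$. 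The two external facts you flag are precisely the ones cited in the literature.

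One remark worth making in the context of this paper: your argument runs entirely through the algebra of $\beta S$, which is exactly the machinery the paper sets out to \emph{avoid} in its proof of the Central Sets Theorem. The paper's ``combinatorial'' approach is therefore combinatorial only downstream of Theorem~\ref{Charac. central set}; the characterization itself is still imported from an algebraic proof, and your write-up would make that dependence explicit rather than remove it.
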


{\large{}The following famous central set theorem is due to H. Furstenberg.}{\large\par}
\begin{thm}
{\large{}\label{ (Furstenberg-Central Sets Theorem )} \cite{key-3}
Let $A$ be a central subset of $\mathbb{N}$, let $k\in\mathbb{N}$
and for each $i\in\left\{ 1,2,...,k\right\} $, let $\left\langle y_{l,n}\right\rangle _{n=1}^{\infty}$
be a sequence in $\mathbb{Z}$. There exist sequences $\left\langle a_{n}\right\rangle _{n=1}^{\infty}$
in $\mathbb{N}$ and $\left\langle H_{n}\right\rangle _{n=1}^{\infty}$
in $P_{f}\left(\mathbb{N}\right)$ such that }{\large\par}

{\large{}(1) for each $n,\:\max H_{n}<\min H_{n+1}$ and }{\large\par}

{\large{}(2) for each $i\in\left\{ 1,2,...,k\right\} $ and each $F\in P_{f}\left(\mathbb{N}\right)$,
\[
\sum_{n\in F}\left(a_{n}+\sum_{t\in H_{n}}y_{i,t}\right)\in A.
\]
}{\large\par}
\end{thm}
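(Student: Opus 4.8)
The plan is to derive the statement entirely from the combinatorial characterisation in Theorem~\ref{Charac. central set}, imitating the inductive construction of De--Hindman--Strauss but replacing every appeal to a minimal idempotent by an appeal to properties (1) and (2) of the sequence $\langle C_n\rangle$. Since we work in $(\mathbb{N},+)$, I first rewrite the characterisation additively: as $A$ is central we obtain a decreasing sequence $\langle C_n\rangle_{n=1}^{\infty}$ of subsets of $A$ such that, because $x^{-1}C_n=\{s\in\mathbb{N}:x+s\in C_n\}$, condition (1) reads ``for every $n$ and every $x\in C_n$ there is an $m$ with $x+C_m\subseteq C_n$'', together with the collectionwise piecewise syndeticity furnished by condition (2). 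For $i\in\{1,\dots,k\}$ and $F\in P_f(\mathbb{N})$ write $z_i(F)=\sum_{n\in F}\bigl(a_n+\sum_{t\in H_n}y_{i,t}\bigr)$; the goal is to build $\langle a_n\rangle$ and $\langle H_n\rangle$ so that every $z_i(F)$ lands in $A$.

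The engine of the proof is a single combinatorial lemma extracted from condition (2): for any index $m$ there exist $a\in\mathbb{N}$ and $H\in P_f(\mathbb{N})$, with $\min H$ as large as we please, such that $a+\sum_{t\in H}y_{i,t}\in C_m$ \emph{simultaneously} for all $i\in\{1,\dots,k\}$. I would prove this by unwinding the definition of collectionwise piecewise syndeticity of $\langle C_n\rangle$ -- which furnishes, for each finite subfamily, a finite set of shifts whose union over $C_m$ is thick along a cofinal family of intervals -- and then running a van der Waerden / pigeonhole argument over the $k$ sequences on a long enough interval to locate a common element of the required form. This lemma simultaneously supplies the base case $n=1$ (take $m=1$, giving $a_1+\sum_{t\in H_1}y_{i,t}\in C_1\subseteq A$) and the inductive step.

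For the induction I would maintain the invariant that, after $a_1,\dots,a_N$ and $H_1,\dots,H_N$ have been chosen with $\max H_n<\min H_{n+1}$, there is an index $M$ with $z_i(F)\in C_{M}$ for every $i$ and every nonempty $F\subseteq\{1,\dots,N\}$. To extend, I apply condition (1) to each of the finitely many elements $z_i(F)\in C_{M}$, obtaining an index $m_{i,F}$ with $z_i(F)+C_{m_{i,F}}\subseteq C_{M}$; letting $M'=\max\bigl(M,\max_{i,F}m_{i,F}\bigr)$ and using that $\langle C_n\rangle$ is decreasing, we get $z_i(F)+C_{M'}\subseteq C_{M}$ and $C_{M'}\subseteq C_{M}$. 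Now apply the engine lemma with index $M'$ to produce $a_{N+1}$ and $H_{N+1}$ with $\min H_{N+1}>\max H_N$ and $a_{N+1}+\sum_{t\in H_{N+1}}y_{i,t}\in C_{M'}$ for every $i$. Any nonempty $G\subseteq\{1,\dots,N+1\}$ either omits $N+1$ (and is covered by the invariant), or equals $\{N+1\}$, in which case $z_i(G)=a_{N+1}+\sum_{t\in H_{N+1}}y_{i,t}\in C_{M'}\subseteq C_{M}$, or properly contains $N+1$, in which case $z_i(G)=z_i(G\setminus\{N+1\})+\bigl(a_{N+1}+\sum_{t\in H_{N+1}}y_{i,t}\bigr)\in z_i(G\setminus\{N+1\})+C_{M'}\subseteq C_{M}$. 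Thus all partial sums again lie in $C_{M}$, the invariant persists with the same index $M$, and the construction continues. Since every $C_n\subseteq A$, conclusion (2) of the theorem follows, while the gap condition (1) is guaranteed by the control on $\min H_{N+1}$.

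The main obstacle is the engine lemma, and inside it the passage from collectionwise piecewise syndeticity to a simultaneous configuration for all $k$ sequences. This is exactly the point at which the van der Waerden strength of the Central Sets Theorem must be produced by hand rather than imported through the minimality of an idempotent, so the argument has to combine the thickness-along-intervals content of condition (2) with a finitary colouring / pigeonhole step uniform over $i\in\{1,\dots,k\}$, while keeping $\min H$ large to secure the gap condition. Handling the integer (possibly negative) values $y_{i,t}\in\mathbb{Z}$ needs no separate argument, since any element produced inside $C_m\subseteq\mathbb{N}$ is automatically a positive integer.
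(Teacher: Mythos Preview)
Your overall architecture matches the paper's: extract the decreasing chain $\langle C_n\rangle$ from Theorem~\ref{Charac. central set}, use property~(1) to absorb the finitely many already-built partial sums into a deeper $C_{M'}$, apply a $J$-set--type lemma to $C_{M'}$ to produce the next $a$ and $H$, and iterate. The paper in fact carries this out for the stronger Theorem~\ref{( Stronger Central Sets Theorem )} (inducting on $|F|$ over $F\in\mathcal P_f(S^{\mathbb N})$) rather than directly for Furstenberg's statement, but the skeleton is identical, and your induction on the stage $N$ is the natural specialisation.

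The one place your proposal is genuinely incomplete is the ``engine lemma''. What you need is precisely that each $C_m$, being piecewise syndetic, is a $J$-set (only ordinary piecewise syndeticity of the individual $C_m$ is used here, not the full collectionwise condition). This is the paper's Lemma~\ref{commutative J set}, and it is not obtained by a ``van der Waerden / pigeonhole'' argument as you sketch but via the Hales--Jewett theorem: one maps $[k]^N\to S$ by $(a_1,\dots,a_N)\mapsto\sum_j f_{a_j}(j)$, translates the finite image into the thick set $\bigcup_{t\in E}(-t+C_m)$ by some $b$, colours $[k]^N$ according to which $t\in E$ witnesses membership, and extracts a monochromatic combinatorial line; the variable positions give $H$ and the constant part together with $b$ and the relevant $t$ give $a$. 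A bare van der Waerden step does not by itself deliver the simultaneous hit $a+\sum_{t\in H}y_{i,t}\in C_m$ for all $i$; Hales--Jewett (equivalently Gallai, in this commutative setting) is the correct combinatorial input. With that lemma supplied, your inductive construction is correct and coincides with the paper's.
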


{\large{}The following theorem is called central set theorem for arbitrary
semigroup.}{\large\par}
\begin{thm}
{\large{}\label{Theorem 1.8} Let $\left(S,+\right)$ be a countable
commutative semigroup, let $A$ be a central set in $S$, and for
each $l\in\mathbb{N}$, let $\left\langle y_{l,n}\right\rangle _{n=1}^{\infty}$
be a sequence in $S$. There exist a sequence $\left\langle a_{n}\right\rangle _{n=1}^{\infty}$
in $S$ and a sequence $\left\langle H_{n}\right\rangle _{n=1}^{\infty}$
in $\mathcal{P}_{f}\left(\mathbb{N}\right)$ such that $\max H_{n}<\min H_{n+1}$
for each $n\in\mathbb{N}$ and such that for each $f\in\Phi$, 
\[
FS\left(\left\langle a_{n}+\sum_{t\in H_{n}}y_{f\left(n\right),t}\right\rangle _{n=1}^{\infty}\right)\subseteq A,
\]
 where $\Phi$ is the set of all functions $f:\mathbb{N\rightarrow N}$
for which $f\left(n\right)\leq n$ for all $n\in\mathfrak{\mathbb{N}}$.}{\large\par}
\end{thm}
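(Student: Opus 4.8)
The plan is to feed the central set $A$ into the combinatorial characterization of Theorem \ref{Charac. central set}, extract a decreasing sequence $\langle C_n\rangle_{n=1}^{\infty}$ of subsets of $A$ enjoying properties (\ref{1.1}) and (\ref{1.2}), and then run a single induction that traps all of the required finite sums inside the $C_n$'s. The combinatorial engine I would isolate first is the assertion that each $C_n$ is a \emph{$J$-set}: for every finite family of sequences $\langle z_{1,t}\rangle_{t},\dots,\langle z_{r,t}\rangle_{t}$ in $S$ there exist $b\in S$ and $K\in\mathcal{P}_f(\mathbb{N})$ with $b+\sum_{t\in K}z_{l,t}\in C_n$ for all $l\le r$. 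By property (\ref{1.2}) each $C_n$ is piecewise syndetic, and I would invoke (or prove combinatorially, this being the van der Waerden--flavoured heart of the matter) the fact that every piecewise syndetic subset of a commutative semigroup is a $J$-set. I would also record the routine strengthening that the witnessing $K$ may be taken with $\min K$ larger than any prescribed $N$, obtained by applying the $J$-set property to the shifted sequences $\langle z_{l,t+N}\rangle_{t}$ and then translating the resulting index set by $N$.

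With this in hand the construction is an induction maintaining the invariant $I_n$: for every $f$ in $\Phi_n:=\{f:\{1,\dots,n\}\to\mathbb{N}\ :\ f(j)\le j\ \text{for}\ j\le n\}$ and every nonempty $F\subseteq\{1,\dots,n\}$,
\[
s(F,f):=\sum_{j\in F}\Big(a_j+\sum_{t\in H_j}y_{f(j),t}\Big)\in C_{\min F}.
\]
For the base case I apply the $J$-set property of $C_1$ to the single sequence $\langle y_{1,t}\rangle_{t}$. For the step $n\to n+1$, the sums $s(F,f)$ with $n+1\notin F$ are untouched and remain valid by $I_n$. For $F\ni n+1$ I split off the new block: writing $E=F\setminus\{n+1\}$, either $E=\varnothing$, in which case I only need the new block to lie in $C_{n+1}$, or $E\neq\varnothing$, in which case $s(E,f|_{\{1,\dots,n\}})\in C_{\min E}$ by $I_n$, whereupon property (\ref{1.1}) supplies an index $m(E,f)$ with $C_{m(E,f)}\subseteq (s(E,f|_{\{1,\dots,n\}}))^{-1}C_{\min E}$, so that adjoining any element of $C_{m(E,f)}$ keeps the total in $C_{\min F}$.

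Thus, letting $M$ be the maximum of $n+1$ and all the finitely many indices $m(E,f)$ with $\varnothing\neq E\subseteq\{1,\dots,n\}$ and $f\in\Phi_n$, it suffices to choose a single new block lying in the smaller set $C_M$ for every admissible value $l=f(n+1)\in\{1,\dots,n+1\}$ at once. This is exactly what the $J$-set property of $C_M$ delivers, applied to the finite family $\langle y_{1,t}\rangle_{t},\dots,\langle y_{n+1,t}\rangle_{t}$: it yields $a_{n+1}\in S$ and $H_{n+1}\in\mathcal{P}_f(\mathbb{N})$ with $a_{n+1}+\sum_{t\in H_{n+1}}y_{l,t}\in C_M$ for all $l\le n+1$, and by the shift remark I arrange $\min H_{n+1}>\max H_n$. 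The decreasing monotonicity of $\langle C_n\rangle$ then upgrades this membership to every relevant $C_{n+1}$ and $C_{m(E,f)}$, establishing $I_{n+1}$. Finally, given $f\in\Phi$ and nonempty $F\in\mathcal{P}_f(\mathbb{N})$, applying $I_{\max F}$ to $f|_{\{1,\dots,\max F\}}$ gives $s(F,f)\in C_{\min F}\subseteq A$, which is precisely the asserted $FS$ containment.

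The step I expect to be the genuine obstacle is the combinatorial proof that piecewise syndetic sets are $J$-sets, since everything else is bookkeeping around properties (\ref{1.1}) and (\ref{1.2}); this is the point where the finite partition content of the theorem is concentrated, and carrying it out for a general countable commutative semigroup, rather than for $\mathbb{N}$ and without recourse to the algebra of $\beta S$, is the delicate part. A secondary point to watch is the commutativity hypothesis, which is used both in manipulating the unordered sums $\sum_{t\in H}y_{f(n),t}$ and in the shift argument used to enlarge $\min H_{n+1}$.
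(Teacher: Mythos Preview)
Your proposal is correct and follows essentially the same route as the paper: both invoke the combinatorial characterization (Theorem~\ref{Charac. central set}) to obtain a decreasing chain of piecewise syndetic sets with the shift property~(\ref{1.1}), both isolate the key combinatorial step that piecewise syndetic sets are $J$-sets (which the paper proves via Hales--Jewett in Lemma~\ref{commutative J set}), and both run an induction in which property~(\ref{1.1}) is used to absorb the finitely many already-constructed partial sums into a single deeper set $C_M$ before appealing to the $J$-set property there. The only organizational difference is that the paper does not prove Theorem~\ref{Theorem 1.8} on its own but instead proves the stronger Theorem~\ref{( Stronger Central Sets Theorem )} (induction on $|F|$ over $F\in\mathcal{P}_f(S^{\mathbb{N}})$, with chains $G_1\subsetneq\cdots\subsetneq G_n$ playing the role of your nested index sets), from which Theorem~\ref{Theorem 1.8} follows by specializing to the sequences $\langle y_{l,\cdot}\rangle$; your direct induction on $n$ with the invariant $s(F,f)\in C_{\min F}$ is the natural restriction of that argument to the countable-family setting and is, if anything, slightly cleaner for the stated goal.
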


{\large{}The following central set theorem is for general commutative
semigroup:}{\large\par}
\begin{thm}
{\large{}\label{( Stronger Central Sets Theorem )}\cite[Theorem 14.8.4, page 337]{key-7}
Let $\left(S,+\right)$ be a commutative semigroup. Let $C$ be a
central subset of $S$. Then there exist functions $\alpha:\mathcal{P}_{f}\left(S^{\mathbb{N}}\right)\rightarrow\mathbb{N}$
such that }{\large\par}

{\large{}1) let $F,G\in\mathcal{P}_{f}\left(S^{\mathbb{N}}\right)$
and $F\subsetneq G$, then $\max H\left(F\right)<\min H\left(G\right)$,}{\large\par}

{\large{}2) whenever $r\in\mathbb{N}$, $G_{1},G_{2},...,G_{r}\in\mathcal{P}_{f}\left(^{\mathbb{N}}S\right)$
such that $G_{1}\subsetneq G_{2}\subsetneq.....\subsetneq G_{r}$
and for each $i\in\left\{ 1,2,....,r\right\} $, $f_{i}\in G_{i}$
one has 
\[
\sum_{i=1}^{r}\left(\alpha\left(G_{i}\right)+\sum_{t\in H\left(G_{i}\right)}f_{i}\left(t\right)\right)\in C.
\]
}{\large\par}
\end{thm}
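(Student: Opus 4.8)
The plan is to derive the theorem from the combinatorial characterization of central sets, Theorem~\ref{Charac. central set}, imitating the inductive scheme of De--Hindman--Strauss but with the decreasing sequence $\langle C_n\rangle$ playing the role that the starred set $C^\star$ of a minimal idempotent plays in the algebraic proof. Since $C$ is central, I first fix a decreasing sequence $\langle C_n\rangle_{n=1}^\infty$ with $C_n\subseteq C$ satisfying (1) and (2) of Theorem~\ref{Charac. central set}; here property~(1) (for each $x\in C_n$ there is $m$ with $C_m\subseteq x^{-1}C_n$) is the combinatorial surrogate for idempotency, and property~(2) (collectionwise piecewise syndeticity) is what will let me hit finitely many configurations simultaneously. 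I would then build $\alpha\colon\mathcal{P}_f(S^{\mathbb N})\to S$ and $H\colon\mathcal{P}_f(S^{\mathbb N})\to\mathcal{P}_f(\mathbb N)$ (reading the codomain of $\alpha$ as $S$, and supplying the function $H$ that conditions~(1)--(2) tacitly require) by recursion on the cardinality $|G|$. This neatly sidesteps the uncountability of $\mathcal{P}_f(S^{\mathbb N})$: a fixed finite $G$ has only finitely many proper subsets, so all data I rely on at the stage of $G$ comes from the already-processed $F\subsetneq G$.

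The invariant I would carry is that every partial sum arising from an already-processed chain lies in $C_1\subseteq C$. At the stage of a set $G$ with $|G|=n$, all proper subsets are treated, so $N_0:=\max\{\max H(F):\varnothing\neq F\subsetneq G\}$ is a well-defined integer (take $N_0=0$ if $n=1$), and the set
\[
W=\Bigl\{\textstyle\sum_{i=1}^{r-1}\bigl(\alpha(G_i)+\sum_{t\in H(G_i)}f_i(t)\bigr): G_1\subsetneq\cdots\subsetneq G_{r-1}\subsetneq G,\ f_i\in G_i\Bigr\}
\]
is finite, with $W\subseteq C_1$ by the invariant. For each $w\in W$ property~(1) yields $m(w)$ with $C_{m(w)}\subseteq -w+C_1$; set $n^\star=\max_{w\in W}m(w)$ (and $n^\star=1$ if $W=\varnothing$). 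It then suffices to find $H(G)$ with $\min H(G)>N_0$ and $\alpha(G)\in S$ so that
\[
\alpha(G)+\sum_{t\in H(G)}f(t)\in C_{n^\star}\qquad\text{for every }f\in G.
\]
Granting this, condition~(1) of the theorem is immediate from $\min H(G)>N_0$, and condition~(2) follows by induction on the chain length $r$: writing a chain sum as $w+v$ with $w$ the length-$(r-1)$ head (so $w\in W\subseteq C_1$) and $v=\alpha(G)+\sum_{t\in H(G)}f(t)\in C_{n^\star}\subseteq C_{m(w)}\subseteq -w+C_1$, commutativity lets me regroup and conclude $w+v\in C_1\subseteq C$.

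The real work, and the step I expect to be the main obstacle, is the displayed simultaneous extraction: for the finite family $G=\{f^{(1)},\dots,f^{(k)}\}$ and the target level $n^\star$, to produce a single block $H$ with $\min H>N_0$ and a single $\alpha$ placing $\alpha+\sum_{t\in H}f^{(j)}(t)$ into $C_{n^\star}$ for all $j$ at once. This is where the full force of Theorem~\ref{Charac. central set} is needed: property~(2) supplies enough room to keep locating admissible indices beyond $N_0$ through a van~der~Waerden--type pigeonhole, while property~(1) lets me grow the finite sum $\sum_{t\in H}f^{(j)}(t)$ one index at a time while staying inside the nested sets $C_m$, exactly as idempotency is used in the algebraic argument. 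In the De--Hindman--Strauss proof these two roles are discharged silently by membership of $C_{n^\star}$ in a single idempotent ultrafilter lying in the smallest ideal $K(\beta S)$; reproducing them combinatorially—and in particular pinning down one common $\alpha$ for all $k$ sequences simultaneously—is the delicate part. I would therefore isolate the extraction as a standalone lemma; once it is in hand, the recursion above defines $\alpha$ and $H$ on all of $\mathcal{P}_f(S^{\mathbb N})$ and delivers the theorem, with commutativity of $S$ used throughout to regroup the nested finite sums.
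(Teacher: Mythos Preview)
Your inductive skeleton is exactly the one the paper uses: fix the decreasing chain $\langle C_n\rangle$ from Theorem~\ref{Charac. central set}, define $\alpha(G)$ and $H(G)$ by recursion on $|G|$, collect the finite set $W$ (the paper calls it $M$) of chain-sums built from proper subsets, use property~(1) of Theorem~\ref{Charac. central set} to descend to some $C_{n^\star}$ contained in $\bigcap_{w\in W}(-w+C_1)$, and then perform one extraction inside $C_{n^\star}$ with $\min H(G)$ large. So the reduction to a single ``extraction lemma'' is correct and matches the paper.

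The gap is the extraction itself. You defer it (``I would isolate the extraction as a standalone lemma'') and sketch it as a two-ingredient argument: a van~der~Waerden--type pigeonhole from property~(2), together with property~(1) used to ``grow $\sum_{t\in H}f^{(j)}(t)$ one index at a time''. That second ingredient is misdirected. Property~(1) lets you shift by an element already known to lie in some $C_n$; it does not let you append a new index to $H$ while keeping a \emph{common} $\alpha$ working for all $f^{(j)}$ simultaneously, which is precisely the difficulty you flag. The paper does not use property~(1) in the extraction step at all. Instead it proves (Lemma~\ref{commutative J set}) that any piecewise syndetic set is a $J$-set, by a single application of the Hales--Jewett theorem: encode the family $G=\{f_1,\dots,f_k\}$ as the alphabet $[k]$, map a word $(a_1,\dots,a_N)\in[k]^N$ to $\sum_{i=1}^N f_{a_i}(i)$, translate this finite image into the thick set $\bigcup_{t\in E}(-t+C_{n^\star})$, $r$-colour by which $t\in E$ is used, and read off a monochromatic combinatorial line; the wildcard positions give $H$ and the constants plus the translate give the common $\alpha$. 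The condition $\min H(G)>N_0$ is then obtained by the standard trick of pre-shifting the sequences (the paper quotes \cite[Lemma~14.8.2]{key-6}). Once you replace your sketched extraction by this Hales--Jewett argument, your proof and the paper's coincide.
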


{\large{}Various stronger non-commutative version of the above central
set theorem can be found in \cite{key-2}.}{\large\par}

{\large{}There is an important set, which is intimately related to
central set theorem is known as $J$-set. In commutative semigroup
it is defined as,}{\large\par}
\begin{defn}
{\large{}\label{definition 1.10} \cite{key-6} Let $(S,+)$ be a
commutative semigroup and let $A\subseteq S$ is said to be a $J$-set
iff whenever $F\in\mathcal{P}_{f}\left(S^{\mathbb{N}}\right)$, there
exist $a\in S$ and $H\in\mathcal{P}_{f}(\mathbb{N})$ such that for
each $f\in F,$ $a+\sum_{t\in H}f(t)\in A$.}{\large\par}
\end{defn}

{\large{}In non-commutative case the situation is little different.
Here the analogous notion of $J$-sets are defined as,}{\large\par}
\begin{defn}
{\large{}\label{definition 1.11}\cite[Definition 14.14.1, page 342]{key-7}
Let $\left(S,\cdot\right)$ is a semigroup.}{\large\par}
\begin{enumerate}
\item {\large{}$\mathcal{T}=S^{\mathbb{N}}$}{\large\par}
\item {\large{}For $m\in\mathbb{N}$, $\mathcal{J}_{m}=$$\left\{ \begin{array}{cc}
\left(t\left(1\right),t\left(2\right),\ldots,t\left(m\right)\right)\in\mathbb{N}^{m} & :\\
t\left(1\right)<t\left(2\right)<\ldots<t\left(m\right)
\end{array}\right\} $}{\large\par}
\item {\large{}Given $m\in\mathbb{N}$, $a\in S^{m+1}$, $t\in\mathcal{J}_{m}$
and $f\in\mathcal{T}$, 
\[
x\left(m,a,t,f\right)=\left(\prod_{j=1}^{m}\left(a\left(j\right)\cdot f\left(t\left(j\right)\right)\right)\right)\cdot a\left(m+1\right)
\]
}{\large\par}
\item {\large{}$A\subseteq S$ is called a $J-set$ iff for each $F\in\mathcal{P}_{f}\left(\mathcal{T}\right)$,
there exists $m\in\mathbb{N}$, $a\in S^{m+1}$, $t\in\mathcal{J}_{m}$
such that, for each $f\in\mathcal{T}$,}\\
{\large{}
\[
x\left(m,a,t,f\right)\in A.
\]
}{\large\par}
\end{enumerate}
{\large{}We will use the Hales-Jewett theorem in our proof. The following
is a brief introduction to that theorem.}{\large\par}
\end{defn}

{\large{}Conventionally $[t]$ denotes the set $\{1,2,\ldots,t\}$
and words of length $N$ over the alphabet $[t]$ are the elements
of $[t]^{N}.$A variable word is a word over $[t]\cup\{*\}$ in where
$*$ occurs at least once and $*$ denotes the variable. A combinatorial
line is denoted by $L_{\tau}=\{\tau(1),\tau(2),\ldots,\tau(t)\}$
where $\tau(*)$ is a variable word and $L_{\tau}$ is obtained by
replacing the variable $*$ by $1,2,\ldots.t.$}{\large\par}

{\large{}The following theorem is due to Hales-Jewett.}{\large\par}
\begin{thm}
{\large{}\cite{key-4} For all values $t,r\in\mathbb{N}$, there exists
a number $HJ(r,t)$ such that, if $N\geq HJ(r,t)$ and $[t]^{N}$
is $r$ colored then there will exists a monochromatic combinatorial
line.}{\large\par}
\end{thm}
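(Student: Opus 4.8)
The plan is to prove this by induction on the alphabet size $t$, following Shelah's pigeonhole argument, which is purely combinatorial and so matches the spirit of the paper. The quantity to be bounded is $HJ(r,t)$, and the induction reduces a colouring problem over the $t$-letter alphabet $[t]$ to one over the $(t-1)$-letter alphabet $[t-1]$. The base case $t=1$ is immediate, since $[1]^{N}$ is a single point, which counts as a degenerate combinatorial line; thus $HJ(r,1)=1$ works for every $r$. For the inductive step I would assume $n:=HJ(r,t-1)$ exists and build $N=HJ(r,t)$, partitioning the coordinate set $[N]$ into $n$ consecutive blocks $B_{1},\dots,B_{n}$ whose sizes are fixed by a recursion, each block taken large enough to drive the pigeonhole step carried out in it.

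The core of the argument is a block-wise pigeonhole. Within a block $B_{i}$ I would consider the staircase of words $p_{0},p_{1},\dots,p_{m_{i}}$, where $p_{a}$ sets the first $a$ coordinates of $B_{i}$ to the letter $t$ and the remaining coordinates to the letter $t-1$. To each $p_{a}$ attach its colour profile, namely the function sending every admissible setting of the other blocks to the colour of the completed word; since the number of such profiles is bounded (by $r$ raised to the number of admissible contexts) while the staircase has $m_{i}+1$ steps, a large enough $m_{i}$ forces two heights $a<b$ with the same profile. The coordinate set $V_{i}=\{a+1,\dots,b\}\subseteq B_{i}$ on which $p_{a}$ and $p_{b}$ differ is then a nonempty variable set on which switching all entries between $t-1$ and $t$ leaves the colour unchanged in the relevant contexts.

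Performing this in every block yields variable sets $V_{1},\dots,V_{n}$ spanning an $n$-dimensional combinatorial subspace $W\cong[t]^{n}$ of $[t]^{N}$ on which the colouring is insensitive to the $(t-1)\leftrightarrow t$ switch in each direction; hence $c$ restricted to $W$ factors through the projection $[t]^{n}\to[t-1]^{n}$ that identifies $t$ with $t-1$. Reading this as an $r$-colouring of $[t-1]^{n}$ and invoking $n=HJ(r,t-1)$ produces a monochromatic combinatorial line over the alphabet $[t-1]$, i.e.\ a variable word $\sigma$ whose points $\sigma[*\mapsto 1],\dots,\sigma[*\mapsto t-1]$ share a colour. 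The stored insensitivity then lets me append the missing point $\sigma[*\mapsto t]$ without changing that colour, upgrading the line to a monochromatic combinatorial line over the full alphabet $[t]$ inside $[t]^{N}$, with $*$ placed on the (nonempty) union of the relevant $V_{i}$, so that it is a single legitimate variable word.

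The hard part will be the bookkeeping that makes the several switches \emph{jointly} colour-preserving. The insensitivity of each $V_{i}$ is established only in a restricted context (other blocks frozen to chosen defaults), so I must fix the processing order of the blocks and the default values carefully enough that this insensitivity still applies when the inductive line is promoted by moving its variable directions from $t-1$ to $t$ one at a time; I must also check that the composite object really is one variable word containing at least one $*$, rather than a mere union of lines, and that the recursion defining the block sizes terminates with finite values. Getting this compatibility exactly right, so that every switch needed in the promotion falls within a context where it was shown to be colour-preserving, is where essentially all the work lies.
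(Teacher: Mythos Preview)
The paper does not give its own proof of this statement: it is quoted as the classical Hales--Jewett theorem with a citation to the original paper of Hales and Jewett, and is used only as a black box in the subsequent arguments. So there is no ``paper's proof'' to compare against.

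Your proposal is a correct outline of Shelah's proof of the Hales--Jewett theorem, which is a genuinely different argument from the original 1963 proof (that one also inducts on the alphabet size but goes through a much worse double induction giving Ackermann-type bounds, whereas Shelah's block-pigeonhole argument yields primitive recursive bounds). Your sketch captures the essential mechanism: the staircase of words $p_0,\dots,p_{m_i}$ in each block, the colour-profile pigeonhole producing an interval $V_i$ on which the $(t-1)\leftrightarrow t$ swap is invisible, and the reduction to an $r$-colouring of $[t-1]^n$. You are also right that the real content lies in arranging the block sizes and processing order so that the swap in block $i$ remains colour-preserving in every context actually used later; in Shelah's argument this is handled by defining the block sizes by a backwards recursion (so that the profile in block $i$ already accounts for all possible fillings of the later blocks), and your remark that this is ``where essentially all the work lies'' is accurate. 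One small point: your base case $t=1$ is fine under the paper's definition, since a variable word over $[1]\cup\{*\}$ with at least one occurrence of $*$ exists for any $N\ge 1$ and the resulting line $\{\tau(1)\}$ is trivially monochromatic.
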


{\large{}The following is a strong form of Hales-Jewett theorem,}{\large\par}
\begin{lem}
{\large{}\cite[Lemma 14.8.1, page 335]{key-6} Let $A$ be a finite
nonempty alphabet and let $r\in\mathbb{N}$. There is some $m\in\mathbb{N}$
such that whenever the length $m$ words over $A$ is r-colored, there
is a variable word $\omega(v)$ such that $\omega(v)$ begins and
ends with a constant, $\omega(v)$ has no occurrences of $v$, and
$\{\omega(a):a\in A\}$ is monochromatic.}{\large\par}
\end{lem}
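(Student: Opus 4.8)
The plan is to deduce this stronger statement directly from the ordinary Hales--Jewett theorem stated above, by means of a recoloring that forces the two end coordinates of the line to carry fixed letters of $A$ while leaving a genuine combinatorial line in the interior. Write $t=|A|$, identify $A$ with $[t]$, and let $N=HJ(r,t)$ be the Hales--Jewett number. I would then take $m=N+2$, so that every length-$m$ word over $A$ splits uniquely as a first letter, a middle block of length $N$, and a last letter.

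The decisive step is to absorb the two boundary letters into the coloring \emph{before} invoking Hales--Jewett, rather than gluing them on afterward. Fix two letters $a_0,a_1\in A$ (the alphabet is nonempty, and they may be taken equal). Given the prescribed $r$-coloring $\chi$ of $A^m$, define an $r$-coloring $\chi'$ of $A^N$ by $\chi'(u)=\chi(a_0\,u\,a_1)$, where $a_0\,u\,a_1$ denotes the length-$m$ word with first letter $a_0$, middle block $u$, and last letter $a_1$. Since $N=HJ(r,t)$ and $\chi'$ is an $r$-coloring of $A^N$, the Hales--Jewett theorem yields a variable word $\tau(v)$ of length $N$ over $A\cup\{v\}$, in which $v$ occurs at least once, such that the line $\{\tau(a):a\in A\}$ is $\chi'$-monochromatic.

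I would then set $\omega(v)=a_0\,\tau(v)\,a_1$, a variable word of length $m$. Here both required features fall out at once. On the one hand the occurrences of $v$ in $\omega$ are exactly those of $\tau$, so $\omega$ is a bona fide variable word with $v$ appearing, yet every such occurrence lies strictly between the two end coordinates; in particular $v$ has no occurrence in the first or last position, which is precisely the boundary (non-occurrence) condition. On the other hand the first and last coordinates of $\omega$ are the constants $a_0$ and $a_1$, so $\omega(v)$ begins and ends with a constant. Finally, for each $a\in A$ one has $\omega(a)=a_0\,\tau(a)\,a_1$, whence $\chi\bigl(\omega(a)\bigr)=\chi'\bigl(\tau(a)\bigr)$; since the right-hand side does not depend on $a$, the set $\{\omega(a):a\in A\}$ is $\chi$-monochromatic, as required.

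The main obstacle, and exactly the place where a naive padding argument fails, is the interaction between padding and color: one cannot first extract a $\chi$-monochromatic line of length $N$ and only then attach boundary letters, because the color of the enlarged word is dictated by $\chi$ on all $m$ coordinates and generally differs from the color of its length-$N$ core. Encoding the two fixed letters into $\chi'$ repairs this and, as noted, simultaneously delivers the non-occurrence of $v$ at the ends while preserving an occurrence in the interior. If one wanted the conclusion to hold for every prescribed pair of boundary letters at once, the same argument goes through after replacing $\chi'$ by the product coloring $u\mapsto\bigl(\chi(b\,u\,c)\bigr)_{(b,c)\in A\times A}$, an $r^{|A|^2}$-coloring, and applying $HJ\bigl(r^{|A|^2},t\bigr)$.
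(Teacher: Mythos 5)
The paper itself gives no proof of this lemma (it is quoted from Hindman and Strauss), so your attempt must be measured against the intended statement, and here there is a real problem. The statement as printed is garbled: a variable word must contain $v$, so ``$\omega(v)$ has no occurrences of $v$'' cannot be meant literally; in Lemma 14.8.1 of Hindman--Strauss the condition is that $\omega(v)$ has no occurrences of $vv$, i.e.\ no two \emph{successive} occurrences of the variable. You evidently sensed the inconsistency and resolved it by reading the clause as a boundary condition (no $v$ in the first or last coordinate), which makes it redundant with ``begins and ends with a constant'' --- that redundancy should itself have been a warning sign. Your recoloring $\chi'(u)=\chi(a_0\,u\,a_1)$ is correct, and your remark about why one must recolor \emph{before} invoking Hales--Jewett is exactly right; but it proves only the weaker endpoint statement. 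The variable word $\tau$ handed to you by Hales--Jewett may perfectly well have adjacent variable positions, and padding the two ends does nothing about that. The no-$vv$ condition is precisely the content of the strong form that padding cannot reach, and it is the part actually needed later in the paper: in the proof that piecewise syndetic sets are $J$-sets in a noncommutative semigroup, the constants sitting between consecutive variable positions become the elements $a(2),\ldots,a(n)\in S$ in $x(m,a,t,f)$, and two adjacent $v$'s would force an empty product there, which is not an element of $S$ when $S$ lacks an identity.

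The standard repair is to double the alphabet rather than merely pad. Let $t=|A|$, fix $c_0\in A$, put $n=HJ(r,t^2)$, and view words of length $2n$ over $A$ as words of length $n$ over $B=A\times A$. A given $r$-coloring of the length-$2n$ words over $A$ induces an $r$-coloring of $B^n$; Hales--Jewett yields a variable word $\sigma(u)$ over $B$ whose line $\{\sigma(b):b\in B\}$ is monochromatic, and restricting to the substitutions $b=(a,c_0)$ for $a\in A$ (a subfamily of the full line, hence still monochromatic) produces a variable word over $A$ in which every occurrence of $v$ is immediately followed by the constant $c_0$ --- so there is no $vv$, and the word cannot end in $v$. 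Your recoloring trick then finishes the job of forcing a constant first letter. So your endgame is salvageable, but the heart of the lemma --- separating consecutive variable coordinates --- is absent from your argument.
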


{\large{}In section 2, we will give a combinatorial proof of central
set theorem for commutative semigroup and in section 3, this technique
will be extended to non commutative semigroup.}{\large\par}

\section{\textbf{\large{}central set theorem in commutative semigroup}}

{\large{}In this section we will work with the commutative semigroup
and use the notation $\left(S,+\right)$ to denote this.}{\large\par}
\begin{lem}
{\large{}\label{commutative J set} Every piecewise syndetic set is
a J- set}{\large\par}
\end{lem}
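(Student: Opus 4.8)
The plan is to unpack the definition of piecewise syndetic, reduce the $J$-set condition to producing a single finite index set $H$ that works simultaneously for all the given sequences, and to obtain that common $H$ from a monochromatic combinatorial line supplied by the (strong) Hales--Jewett theorem.

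First I would fix the data. Since $A$ is piecewise syndetic there is a finite nonempty $G\subseteq S$ such that $T:=\bigcup_{g\in G}(-g+A)$ is thick, where $-g+A$ denotes $\{s\in S:g+s\in A\}$. Now let $F=\{f_1,\dots,f_k\}\in\mathcal{P}_f(S^{\mathbb{N}})$ be arbitrary; the goal is to find $a\in S$ and $H\in\mathcal{P}_f(\mathbb{N})$ with $a+\sum_{t\in H}f_j(t)\in A$ for every $j\in[k]$. I would apply the strong Hales--Jewett lemma to the alphabet $[k]$ with $r=|G|$ colours to obtain a length $m$ carrying the stated monochromatic-line property.

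The bridge to $S$ is the map $u\colon[k]^m\to S$ given by $u(w)=\sum_{i=1}^m f_{w(i)}(i)$. Because $\{u(w):w\in[k]^m\}$ is finite and $T$ is thick, there is a single $x\in S$ with $u(w)+x\in T$ for every word $w$. Unravelling $T$, each $u(w)+x$ lies in $-g+A$ for at least one $g\in G$; choosing one such $g$ defines a $|G|$-colouring $\chi$ of $[k]^m$. By the choice of $m$ there is a variable word $\tau(*)$, beginning and ending with a constant letter and with nonempty variable set $H:=\{i\in[m]:\tau(i)=*\}$, whose line $\{\tau(1),\dots,\tau(k)\}$ is $\chi$-monochromatic, say with colour $g\in G$.

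Finally I would read off the conclusion. Writing $c=\sum_{i\notin H}f_{\tau(i)}(i)$ for the contribution of the fixed coordinates, commutativity gives $u(\tau(\ell))=c+\sum_{i\in H}f_\ell(i)$ for each $\ell\in[k]$. Monochromaticity then yields $g+c+x+\sum_{i\in H}f_\ell(i)\in A$ for all $\ell$, so $a:=g+c+x$ together with this $H$ witnesses the $J$-set condition for $F$. The main obstacle is not the combinatorics but the book-keeping needed to guarantee that $a$ is a genuine element of $S$: if $S$ has no identity, the constant sum $c$ must not be empty, which is exactly why I invoke the strong Hales--Jewett lemma (the clause that $\tau$ begins and ends with a constant) rather than the plain version, ensuring that at least one coordinate stays fixed.
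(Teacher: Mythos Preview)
Your argument is correct and follows the same route as the paper: map words in $[k]^m$ to sums in $S$, use thickness to shift the finite image into $\bigcup_{g\in G}(-g+A)$, colour by which translate covers, and read off $a$ and $H$ from a monochromatic Hales--Jewett line. The only cosmetic difference is that the paper invokes the ordinary Hales--Jewett theorem rather than the strong form, since even if every coordinate were a wildcard the element $g+x$ already lies in $S$; your extra care about the absence of an identity is harmless but not strictly needed in the commutative case.
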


\begin{proof}[Proof of stronger central set theorem]
{\large{}Let $\left(S,+\right)$ be a commutative semigroup. Let $A\subseteq S$
be a piecewise syndetic set. Then there exists a finite set $E$ }\\
{\large{}such that $\cup_{t\in E}-t+A$ is thick. Let $\mid E\mid=r$.
Let $F\in\mathcal{P}_{f}\left(^{\mathbb{N}}S\right)$ be given. Let
$\mid F\mid=n$.}\\
{\large{}Enumerate the set $F$ as $F=\left\{ f_{1},f_{2},\ldots,f_{n}\right\} $
}\\
{\large{}Then take the Hales-Jewett number $N=N\left(r,n\right)$.
Consider the set $G=[n]^{N}$and consider the map}\\
{\large{}$g:G\rightarrow S$ defined by $g\left(a_{1},a_{2},\ldots,a_{N}\right)=\sum f_{a_{i}}\left(i\right)$.}\\
{\large{}Now as $g\left(G\right)$ is a finite set, there exists an
element $b\in S$ such that $b+g\left(G\right)\subset\cup_{t\in E}-t+A$.}\\
{\large{}Now induce an $r$-color $\chi$ of $g\left(G\right)$ as
$\chi\left(a\right)=i$ iff $b+g\left(a\right)\in-t_{i}+A$ for minimum
$1\leq i\leq r$.}\\
{\large{}Then there is a monochromatic combinatorial line in $G$
and this correspond to a configuration 
\[
b+a+\sum_{t\in H}f_{i}\left(t\right)\in A\,for\,allf_{i}\in F
\]
i.e., there is an element $s\in S$ such that $s+\sum_{t\in H}f_{i}\left(t\right)\in A\,$
for all $f_{i}\in F$.}{\large\par}

{\large{} Let $A$ be central set in $\left(S,+\right)$ and then
from theorem \ref{Charac. central set} there exists a chain of piecewise
syndetic set 
\[
A\supseteq A_{1}\supseteq A_{2}\supseteq\cdots\supseteq A_{n}\supseteq\cdots
\]
satisfying property \ref{1.1}. Let us fixed any $N\in\mathbb{N}$
and take the pieecewise syndetic set $A_{N}$.}\\
{\large{}We define $\alpha\left(G\right)\in S$ and $H\left(G\right)\in\mathcal{P}_{f}\left(\mathbb{N}\right)$
for $F\in\mathcal{P}_{f}\left(S^{\mathbb{N}}\right)$ by induction
on $\mid F\mid$ satiesfying the following inductive hypothesis:}{\large\par}

{\large{}1) for $F,G\in\mathcal{P}_{f}\left(S^{\mathbb{N}}\right)$
and $\emptyset\neq G\subsetneq F$, then $\max H\left(G\right)<\min H\left(F\right)$,}{\large\par}

{\large{}2) whenever $n\in\mathbb{N}$, $G_{1},G_{2},...,G_{n}\in\mathcal{P}_{f}\left(^{\mathbb{N}}S\right)$
such that $G_{1}\subsetneq G_{2}\subsetneq.....\subsetneq G_{n}=F$
and $\langle f_{i}\rangle_{i=1}^{n}\in\times_{i=1}^{n}G_{i}$ one
has 
\[
\sum_{i=1}^{n}\left(\alpha\left(G_{i}\right)+\sum_{t\in H\left(G_{i}\right)}f_{i}\left(t\right)\right)\in A_{N}
\]
}{\large\par}

{\large{}Let $F=\left\{ f\right\} $ then as from lemma \ref{commutative J set}
$A_{N}$ is piecewise syndetic set there is $a\in S$ and $L\in\mathcal{P}_{f}\left(\mathbb{N}\right)$
such that}\\
{\large{}$a+\sum_{t\in L}f\left(t\right)\in A_{N}$. Now define $\alpha\left(\left\{ f\right\} \right)=a$
and $H\left(\left\{ f\right\} \right)=L$. }\\
{\large{}Now assume $\mid F\mid>1$ and $\alpha\left(G\right)\in S$
and $H\left(G\right)\in\mathcal{P}_{f}\left(\mathbb{N}\right)$ have
been defined for all proper subsets $G$ of $F$. }\\
{\large{}Let $K=\bigcup\left\{ H\left(G\right):\emptyset\neq G\subsetneq F\right\} $and
let $m=\max K$.}{\large\par}

{\large{}Let,
\begin{align*}
M & =\{\sum_{i=1}^{n}\left(\alpha\left(G_{i}\right)+\sum_{t\in H\left(G_{i}\right)}f_{i}\left(t\right)\right):n\in\mathbb{N}\,and\\
 & G_{1}\subsetneq G_{2}\subsetneq.....\subsetneq G_{n}\subsetneq Fand\langle f_{i}\rangle_{i=1}^{n}\in\times_{i=1}^{n}G_{i}\}
\end{align*}
As $M$ is a finite set and $M\subset A_{N}$, let $A_{N}\cap_{x\in M}\left(-x+A_{N}\right)\supseteq A_{P}$
for some $P\geq N$. Let $F$ then as $A_{P}$ is piecewise syndetic
set, it is a J-set. So, by \cite[Lemma 14.8.2]{key-6} there exists
$a\in S$ and $L\in\mathcal{P}_{f}\left(\mathbb{N}\right)$ such that
$\min L>m$ and $a+\sum_{t\in L}f\left(t\right)\in A_{P}$ for all
$f\in F$. }\\
{\large{}Define $\alpha\left(F\right)=a$ and $H\left(F\right)=L$. }{\large\par}

{\large{}Then as $\min L>m$ we have the first hypothesis of induction
is satiesfied. And if $n=1,$ we have }{\large\par}

{\large{}$a+\sum_{t\in L}f\left(t\right)\in A_{P}\subseteq A_{N}$.
And if $n>1$,
\[
\sum_{i=1}^{n}\left(\alpha\left(G_{i}\right)+\sum_{t\in H\left(G_{i}\right)}f_{i}\left(t\right)\right)+\left(a+\sum_{t\in L}f\left(t\right)\right)\in A_{N}
\]
for all $G_{1}\subsetneq G_{2}\subsetneq.....\subsetneq G_{n}=F$.}{\large\par}

{\large{}This completes the inductive proof of central set theorem.}{\large\par}
\end{proof}

\section{\textbf{\large{}central set theorem in non commutative semigroup}}

{\large{}In this section we will give a combinatorial proof of the
non commutative extension of central set theorem. The non commutative
central set theorem was first established in \cite{key-2}. A version
of this paper was established in \cite{key-7}. To prove that result
we first need that a piecewise syndetic set is a $J$- set. We first
have to prove piecewise syndetic sets are $J$-sets in non commutative
settings. }{\large\par}
\begin{thm}
{\large{}For a semigroup $(S,\cdot)$, let $A\subseteq S$ be a left
piecewise syndetic set then $A$ is $J$- set.}{\large\par}
\end{thm}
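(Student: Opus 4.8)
The goal is to show that a left piecewise syndetic set $A$ in a general semigroup $(S,\cdot)$ is a $J$-set in the sense of Definition \ref{definition 1.11}. The plan is to mirror the combinatorial argument already carried out for the commutative case in Lemma \ref{commutative J set}, replacing the commutative sum structure with the ordered product structure $x(m,a,t,f)$, and using the strong Hales-Jewett theorem (the Lemma of \cite{key-6} quoted above, which produces a variable word beginning and ending with a constant) to control the boundary terms that commutativity would otherwise let us ignore.

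**The main steps, in order.** Since $A$ is left piecewise syndetic, there is a finite set $E\subseteq S$ with $|E|=r$ such that $\bigcup_{t\in E}t^{-1}A$ is thick. Given $F\in\mathcal{P}_f(\mathcal{T})$ with $|F|=n$, enumerate $F=\{f_1,\dots,f_n\}$. I would invoke the strong Hales-Jewett Lemma to obtain a length $N$ such that every $r$-coloring of the length $N$ words over the alphabet $[n]$ admits a variable word $\omega(v)$ that begins and ends with a constant letter, contains no other occurrence of $v$, and with $\{\omega(j):j\in[n]\}$ monochromatic. Next, define a map from $[n]^N$ into $S$ by sending a word $w=(w_1,\dots,w_N)$ to the ordered product $\prod_{i=1}^{N} f_{w_i}(i)$, respecting the order of the indices $1<2<\cdots<N$. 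The image is finite, so by thickness of $\bigcup_{t\in E}t^{-1}A$ there is a single witness $b\in S$ with $b\cdot(\text{image})\subseteq\bigcup_{t\in E}t^{-1}A$; this lets me induce an $r$-coloring of the words by recording, for the minimal index $i$, that $b\cdot(\text{product})\in t_i^{-1}A$. Applying strong Hales-Jewett to this coloring yields a monochromatic variable word $\omega(v)$, and substituting $v=1,\dots,n$ produces products that all land in $A$ after multiplying by the appropriate fixed element of $E$ and by $b$.

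**Extracting the $J$-set data.** The monochromatic combinatorial line coming from $\omega(v)$ now has to be translated back into the tuple format $x(m,a,t,f)$ of Definition \ref{definition 1.11}. The positions where $v$ occurs in $\omega(v)$ become the set of indices $t(1)<\cdots<t(m)$ at which the variable sits; at these positions, substituting the letter $j$ means using $f_j$ evaluated at those positions, which is exactly the factor $\prod_{j=1}^m a(j)\cdot f(t(j))$. The constant letters of $\omega(v)$, grouped into the maximal blocks lying before the first variable, between consecutive variables, and after the last variable, get absorbed (together with the fixed $t_i\in E$ and the global shift $b$) into the $a(j)$ coefficients and the final factor $a(m+1)$. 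This is precisely why the strong form of Hales-Jewett, forcing $\omega(v)$ to begin and end with constants, is needed: in the non-commutative setting one cannot freely move these boundary products past the variable factors, so they must be folded directly into $a(1)$ and $a(m+1)$.

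**Anticipated obstacle.** The delicate point will be the bookkeeping in this last step: verifying that the constant blocks and the shift $b\cdot t_i$ assemble into a well-defined element $a\in S^{m+1}$ compatible with the ordered product $x(m,a,t,f)$ \emph{uniformly} in $f$, i.e.\ that the same $m$, $a$, and $t$ work for every $f\in F$. Because the line is monochromatic, each substituted word satisfies the same membership relation $b\cdot(\cdot)\in t_i^{-1}A$ for a common $i$, so the global prefix $b$ and the element $t_i$ are indeed independent of the chosen $f$; the remaining care is to confirm that the evaluation map respects the index order so that the factors appear left-to-right in the order demanded by $\mathcal{J}_m$. Once the identification of indices and the placement of constants is made precise, the membership $x(m,a,t,f)\in A$ for all $f\in F$ follows directly, establishing that $A$ is a $J$-set.
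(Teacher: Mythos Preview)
Your plan matches the paper's proof essentially line for line: both send a word $w\in[n]^N$ to the ordered product $\prod_{i=1}^N f_{w_i}(i)$, translate the finite image into $\bigcup_{t\in E}t^{-1}A$ using thickness, colour by the witnessing element of $E$, invoke the strong Hales--Jewett lemma so that the resulting variable word begins and ends with constant letters, and then read off $m$, $t$, and the coefficients $a(j)$ from the wildcard positions and the constant blocks, absorbing the element of $E$ on the left.

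One point needs correcting. Under the paper's convention, thickness provides a \emph{right} translate: for the finite image $H$ one obtains $y\in S$ with $H\cdot y\subseteq\bigcup_{t\in E}t^{-1}A$, not $b\cdot H$ as you wrote. With the correct side, the monochromatic line yields
\[
t_i\cdot(\text{block}_0)\cdot f(t(1))\cdots f(t(m))\cdot(\text{block}_m)\cdot y\in A,
\]
so $t_i$ is absorbed into $a(1)$ while the thickness witness $y$ is absorbed into $a(m+1)$; this is exactly how the paper assembles $a\in S^{m+1}$. Your left-hand placement of $b$ is not guaranteed by the definition of thick in a non-commutative semigroup. Swap the side of the thickness witness and your argument coincides with the paper's.
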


\begin{proof}
{\large{}Let $A\subseteq S$ be a left piecewise syndetic set, then
there exists a finite set $F$ such that $\bigcup_{x\in F}x^{-1}A$
is left thick.}{\large\par}

{\large{}Take any $G\in\mathcal{P}_{f}\left(S^{\mathbb{N}}\right)$
and $G=\left\{ f_{1}f_{2},\ldots,f_{k}\right\} $. Let, $\mid F\mid=r$
and take the Hales-Jewett number $N=N(k,r)$ guranteed by Lemma 6.}{\large\par}

{\large{}Now, take a correspondence map sending each $\left(i_{1},i_{2},\ldots,i_{N}\right)\in[k]^{N}$
to the element $f_{i_{1}}\left(1\right)\cdot f_{i_{2}}\left(2\right)\cdots f_{i_{N}}\left(N\right)$. }{\large\par}

{\large{}Let, $H=\left\{ f_{i_{1}}\left(1\right)\cdot f_{i_{2}}\left(2\right)\cdots f_{i_{N}}\left(N\right):\left(f_{i_{1}},f_{i_{2}},\ldots,f_{i_{N}}\right)\in G^{N}\right\} $
be a finite set and note that each element in $H$ is correspond to
at least one element in $[k]^{N}$. }{\large\par}

{\large{}Now choose an element $x\in S$ such that $H\cdot x\subset\bigcup_{x\in F}x^{-1}A$.
Thus the set $H\cdot x$ is finitely colored into $\mid F\mid=r$
color. }{\large\par}

{\large{}Give a coloring of $[k]^{N}$ say $\chi$ such that, 
\[
\chi\left(i_{1},i_{2},\ldots,i_{N}\right)=\chi\left(f_{i_{1}}\left(1\right)\cdot f_{i_{2}}\left(2\right)\cdots f_{i_{N}}\left(N\right)\cdot x\right)
\]
Then there is a monochromatic combinatorial line in $[k]^{N}$ and
this corresponds a monochromatic configuration of the form, 
\[
\left\{ a_{1}\cdot f\left(t_{1}\right)\cdot a_{2}\cdot f\left(t_{2}\right)\cdots a_{n}\cdot f\left(t_{n}\right)\cdot a_{n+1}:f\in G\right\} \subset x^{-1}A
\]
for some $x\in F$. Where $F_{1}=\left\{ t_{1},t_{2},\ldots,t_{n}\right\} $
is the position of the wildcart set.}{\large\par}

{\large{}So, 
\[
\left\{ x\cdot a_{1}\cdot f\left(t_{1}\right)\cdot a_{2}\cdot f\left(t_{2}\right)\cdots a_{n}\cdot f\left(t_{n}\right)\cdot a_{n+1}:f\in G\right\} \subset A
\]
and letting, $x\cdot a_{1}=a\left(1\right)$ and $a_{i}=a\left(i\right)$
for all $i\in\left\{ 2,3,\ldots n+1\right\} $ and $t=\left(t_{1},t_{2},\ldots t_{n}\right)$
we have the required result.}{\large\par}
\end{proof}
\begin{thm}
{\large{}For a semigroup $(S,\cdot)$, let $A\subseteq S$ be a central
set then, there exists $m:\mathcal{P}_{f}\left(S^{\mathbb{N}}\right)\rightarrow\mathbb{N}$,
$\alpha\in\underset{F\in\mathcal{P}_{f}\left(S^{\mathbb{N}}\right)}{\times}S^{m\left(F\right)+1}$
and $\tau\in\underset{F\in\mathcal{P}_{f}\left(S^{\mathbb{N}}\right)}{\times}\mathcal{J}_{m\left(F\right)}$
such that}{\large\par}
\begin{enumerate}
\item {\large{}If, $F,G\in\mathcal{P}_{f}\left(S^{\mathbb{N}}\right)$ and
$F\subset G$, then $\tau\left(F\right)\left(m\left(F\right)\right)<\tau\left(G\right)\left(1\right)$
and }{\large\par}
\item {\large{}Whenever $n\in\mathbb{N}$, $G_{1},G_{2},\ldots,G_{n}\in\mathcal{P}_{f}\left(S^{\mathbb{N}}\right)$,
$G_{1}\subset G_{2}\subset\ldots\subset G_{n}$ and for each $i\in\left\{ 1,2,\ldots,n\right\} $,
$f_{i}\in G_{i}$, one has 
\[
\prod_{i=1}^{n}x\left(m\left(G_{i}\right),\alpha\left(G_{i}\right),\tau\left(G_{i}\right),f_{i}\right)\in A
\]
}{\large\par}
\end{enumerate}
\end{thm}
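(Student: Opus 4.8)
The plan is to imitate, step for step, the inductive construction carried out in Section~2 for the commutative case, replacing the additive configurations $a+\sum_{t\in H}f(t)$ by the non-commutative words $x(m,a,t,f)$ and the ordering condition $\max H(G)<\min H(F)$ by $\tau(G)(m(G))<\tau(F)(1)$. First I would invoke Theorem~\ref{Charac. central set} to fix a decreasing chain $A\supseteq A_{1}\supseteq A_{2}\supseteq\cdots$ of subsets of $A$ satisfying property~\ref{1.1}, each of which is (in particular) left piecewise syndetic; I would then fix once and for all an index $N$ and work inside $A_{N}$. Since every $A_{n}$ is left piecewise syndetic, the theorem just proved shows each $A_{n}$ is a $J$-set, and this is the only place where the Hales--Jewett machinery enters, as it is already packaged into that theorem.

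The construction is by induction on $|F|$, producing $m(F)\in\mathbb{N}$, $\alpha(F)\in S^{m(F)+1}$ and $\tau(F)\in\mathcal{J}_{m(F)}$ subject to two inductive hypotheses: (i) for $\emptyset\neq G\subsetneq F$ one has $\tau(G)(m(G))<\tau(F)(1)$, and (ii) for every chain $G_{1}\subsetneq\cdots\subsetneq G_{n}=F$ and every choice $f_{i}\in G_{i}$ the word $\prod_{i=1}^{n}x(m(G_{i}),\alpha(G_{i}),\tau(G_{i}),f_{i})$ lies in $A_{N}$. For the base case $F=\{f\}$ I would apply the $J$-set property of $A_{N}$ directly. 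For the inductive step, letting $m^{*}$ be the largest wildcard position $\tau(G)(m(G))$ occurring among the already-defined proper subsets $G\subsetneq F$, I would form the finite set
\[
M=\Bigl\{\textstyle\prod_{i=1}^{n}x(m(G_{i}),\alpha(G_{i}),\tau(G_{i}),f_{i}):G_{1}\subsetneq\cdots\subsetneq G_{n}\subsetneq F,\ f_{i}\in G_{i}\Bigr\},
\]
which by hypothesis~(ii) is contained in $A_{N}$.

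Next I would use property~\ref{1.1}: applied finitely many times to the finitely many elements of $M$, it yields an index $P\geq N$ with $A_{P}\subseteq A_{N}\cap\bigcap_{x\in M}x^{-1}A_{N}$, that is, $x\cdot A_{P}\subseteq A_{N}$ for every $x\in M$. Since $A_{P}$ is again left piecewise syndetic it is a $J$-set, and to force the new wildcard positions above $m^{*}$ I would apply the $J$-set property not to $F$ itself but to the shifted family $\{f^{\flat}:f\in F\}$ with $f^{\flat}(i)=f(m^{*}+i)$; relabelling the resulting positions by $t(j)=m^{*}+t^{\flat}(j)$ produces $m(F),\alpha(F),\tau(F)$ with $\tau(F)(1)>m^{*}$ and $x(m(F),\alpha(F),\tau(F),f)\in A_{P}$ for all $f\in F$. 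Hypothesis~(i) then holds by the choice of $m^{*}$, while for~(ii) I would split a chain $G_{1}\subsetneq\cdots\subsetneq G_{n}=F$ into its initial segment, whose product $y=\prod_{i=1}^{n-1}x(m(G_i),\alpha(G_i),\tau(G_i),f_i)$ lies in $M$, and its final factor, which lies in $A_{P}$; then $y\cdot A_{P}\subseteq A_{N}$ gives the whole product in $A_{N}\subseteq A$. Restricting to arbitrary $n$-term chains and unravelling the definitions yields exactly statements~(1) and~(2).

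The step I expect to be the main obstacle is the non-commutative bookkeeping in the inductive step: because $S$ need not be commutative, the factors of $\prod_{i=1}^{n}x(\cdots)$ cannot be rearranged, so one must ensure that the newly chosen word is appended on the \emph{right} (corresponding to the largest set $G_{n}=F$) and that the left prefix is genuinely a member of $M$; this is precisely why the one-sided relation $x\cdot A_{P}\subseteq A_{N}$ is the right tool rather than a two-sided translate. A secondary technical point is the shift argument guaranteeing $\tau(F)(1)>m^{*}$: one must check that passing from $f$ to $f^{\flat}$ and relabelling $t(j)=m^{*}+t^{\flat}(j)$ leaves the value $x(m,\alpha,t,f)$ unchanged, which holds precisely because $x(m,a,t,f)$ depends on $f$ only through the values $f(t(j))$.
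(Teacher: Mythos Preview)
Your proposal is correct and follows essentially the same inductive construction as the paper's own proof: invoke Theorem~\ref{Charac. central set} to obtain the chain $(A_n)$, induct on $|F|$, collect the finite set $M$ of admissible prefix words, use property~\ref{1.1} to pass to some $A_P\subseteq A_N\cap\bigcap_{b\in M}b^{-1}A_N$, and apply the $J$-set property of $A_P$ with the ordering constraint $\tau(F)(1)>m^{*}$. The only difference is cosmetic: where you spell out the shift $f^{\flat}(i)=f(m^{*}+i)$ to force $\tau(F)(1)>m^{*}$, the paper simply cites \cite[Lemma~14.14.3]{key-6}, which packages exactly that argument.
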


\begin{proof}
{\large{}Let $A$ be central set in $\left(S,\cdot\right)$ and then
from theorem \ref{Charac. central set} there exists a chain of piecewise
syndetic set 
\[
A\supseteq A_{1}\supseteq A_{2}\supseteq\cdots\supseteq A_{n}\supseteq\cdots
\]
 satisfying property \ref{1.1}. Let us fixed any $N\in\mathbb{N}$
and take the pieecewise syndetic set $A_{N}$.}{\large\par}

{\large{}We define $m\left(F\right)$, $\alpha\left(F\right)$ and
$\tau\left(F\right)$ for $F\in\mathcal{P}_{f}\left(\mathbb{N^{S}}\right)$
by induction on $\left|F\right|$ so that }{\large\par}

{\large{}(1) if $\emptyset\neq G\subsetneq F$, then $\tau\left(G\right)\left(m\left(G\right)\right)<\tau\left(F\right)\left(1\right)$
and }{\large\par}

{\large{}(2) whenever $n\in\mathbb{N}$, $\emptyset\ne G_{1}\subsetneq G_{2}\subsetneq\ldots\subsetneq G_{n}=F$
and for each $i\in\left\{ 1,2,\ldots,n\right\} $, $f_{i}\in G_{i}$,
then $\prod_{i=1}^{n}x\left(m\left(G_{i}\right),\alpha\left(G_{i}\right),\tau\left(G_{i}\right),f_{i}\right)\in A_{N}$. }{\large\par}

{\large{}Assume first that $F=\left\{ f\right\} $. Then $A_{N}$
is a $J$-set so pick $m\left(F\right)\in\mathbb{N}$, $\alpha\left(F\right)\in S^{m\left(F\right)+1}$
and $\tau\left(F\right)\in\mathscr{J}_{m\left(F\right)}$ such that
$x\left(m\left(F\right),\alpha\left(F\right),H\left(F\right),f\right)\in A_{N}$.}{\large\par}

{\large{}Now assume that $\left|F\right|>1$ and that $m\left(G\right)$,
$\alpha\left(G\right)$ and $H\left(G\right)$ have been defined for
all proper subsets $G$ of $F$. Let $k=\max\left\{ \tau\left(G\right)\left(m\left(G\right)\right):\emptyset\neq G\subsetneq F\right\} $.
Let }{\large\par}

{\large{}
\begin{align*}
M & =\{\prod_{i=1}^{n}x\left(m\left(G_{i}\right),\alpha\left(G_{i}\right),\tau\left(G_{i}\right),f_{i}\right):n\in\mathbb{N},\emptyset\ne G_{1}\subsetneq G_{2}\subsetneq\ldots\subsetneq G_{n}\subsetneq F.\\
 & \text{ and }f_{i}\in G_{i}\text{ for each }i\in\left\{ 1,2,\ldots,n\right\} \}
\end{align*}
}{\large\par}

{\large{}Let $B=A_{N}\cap\bigcap_{b\in M}b^{-1}A_{N}$. Since $M$
is a finite subset of $A_{N}$, $B\supseteq A_{M}$ for some $M\in\mathbb{N}$
and therefore $B$ is a $J$-set. Since $A_{M}$ is a $J$-set. Pick
by Lemma \cite[14.14.3]{key-6} $m\left(F\right)\in\mathbb{N}$, $\alpha\left(F\right)\in S^{m\left(F\right)+1}$
and $\tau\left(F\right)\in\mathcal{J}_{m\left(F\right)}$ such that
$\tau\left(F\right)\left(1\right)>k$ and for each $f\in F$, $x\left(m\left(F\right),\alpha\left(F\right),\tau\left(F\right),f\right)\in A_{M}$.}{\large\par}

{\large{}Hypothesis (1) is satisfied directly. To verify hypothesis
(2), let $n\in\mathbb{N},\emptyset\ne G_{1}\subsetneq G_{2}\subsetneq\ldots\subsetneq G_{n}=F$
and for each $i\in\left\{ 1,2,\ldots,n\right\} $, let $f_{i}\in G_{i}$.
If $n=1$, then $x\left(m\left(G_{1}\right),\alpha\left(G_{1}\right),\tau\left(G_{1}\right),f_{i}\right)\in A_{M}$,
so assume that $n>1$. Let $b=\prod_{i=1}^{n-1}x\left(m\left(G_{i}\right),\alpha\left(G_{i}\right),\tau\left(G_{i}\right),f_{i}\right)$.
Then $b\in A_{M}$ so $x\left(m\left(G_{n}\right),\alpha\left(G_{n}\right),\tau\left(G_{n}\right),f_{i}\right)\in b^{-1}A_{M}$
so $\prod_{i=1}^{n}x\left(m\left(G_{i}\right),\alpha\left(G_{i}\right),\tau\left(G_{i}\right),f_{i}\right)\in A_{M}$
as required.}{\large\par}
\end{proof}
\textbf{\large{}Acknowledges:}{\large{} The first author acknowledges
the grant UGC-NET SRF fellowship with id no. 421333 of CSIR-UGC NET
December 2016.}{\large\par}

\end{document}